\documentclass[a4paper]{article}



\usepackage{amssymb, amsmath, amscd, amsfonts, amsthm}
\usepackage{colonequals}

\usepackage{microtype} 
\usepackage{pinlabel} 


\usepackage[
pdfborderstyle={},
pdfborder={0 0 0},
pagebackref,
pdftex]{hyperref}

\renewcommand*{\backref}[1]{}
\renewcommand*{\backrefalt}[4]{
  \ifcase #1 %
   [No citations.]%
  \or
   [#2]%
  \else
   [#2]%
  \fi
}

\newcommand{\calG}{\mathcal{G}}
\newcommand{\calT}{\mathcal{T}}

\newcommand{\defeq}{\colonequals}
\DeclareMathOperator{\intersection}{\iota}

\theoremstyle{plain}
\numberwithin{equation}{section} 
\newtheorem{theorem}[equation]{Theorem}
\newtheorem*{theorem*}{Theorem}
\newtheorem{corollary}[equation]{Corollary}
\newtheorem*{corollary*}{Corollary}
\newtheorem{lemma}[equation]{Lemma}
\newtheorem*{lemma*}{Lemma}
\newtheorem{proposition}[equation]{Proposition}
\newtheorem*{proposition*}{Proposition}

\theoremstyle{definition}
\newtheorem{definition}[equation]{Definition}
\newtheorem*{definition*}{Definition}

\newtheorem*{acknowledgements}{Acknowledgements}

\newtheoremstyle{dotless}{}{}{}{}{\bfseries}{}{ }{}
\theoremstyle{dotless}

\newcommand{\fakeenv}{} 

\newenvironment{restate}[2]  
{ 
 \renewcommand{\fakeenv}{#2} 
 \theoremstyle{plain} 
 \newtheorem*{\fakeenv}{#1~\ref{#2}} 
 \begin{\fakeenv}
}
{
 \end{\fakeenv}
}

\newcommand{\inlineQED}{\pushQED{\qed} \qedhere \popQED}
\newcommand{\inlineand}{\quad \textrm{and} \quad}


\usepackage[outline]{contour}
\contourlength{0.1em}
\usepackage{color}
\usepackage{tikz}
\usetikzlibrary{calc}
\usetikzlibrary{decorations.pathreplacing}
\usetikzlibrary{shapes.geometric}

\newcommand{\ai}{\mathbf{a}}
\newcommand{\bi}{\mathbf{b}}
\newcommand{\ci}{\mathbf{c}}
\newcommand{\di}{\mathbf{d}}
\newcommand{\ei}{\mathbf{e}}
\newcommand{\ffi}{\mathbf{f}}

\newcommand{\emax}{e_{\textrm{max}}}

\title{Simplifying triangulations}
\author{Mark C. Bell\\
University of Illinois\\
\url{mcbell@illinois.edu}}

\begin{document}

\maketitle

\begin{abstract}
We give a new algorithm to simplify a given triangulation with respect to a given curve. The simplification uses flips together with powers of Dehn twists in order to complete in polynomial time in the bit-size of the curve.
\end{abstract}

\keywords{triangulations of surfaces, flip graphs, Dehn twists}

\ccode{57M20}  

\section{Introduction}

Let $S$ be an (orientable) punctured surface and let $\zeta = \zeta(S) \defeq -3 \chi(S)$. We will assume that $S$ is sufficiently complex that $\zeta \geq 3$ and so $S$ can be decomposed into an (ideal) triangulation. Any such triangulation of $S$ has exactly $\zeta$ edges.

A curve $\gamma$ on $S$ may appear extremely complicated from the point of view of a triangulation. However there is always a triangulation $\calT$ in which $\intersection(\gamma, \calT) \leq 2 \zeta$. Such a triangulation, which we refer to as \emph{$\gamma$--simple}, is extremely useful for performing calculations with. To give just a few examples, if $\gamma$ is given on a $\gamma$--simple triangulation then it is straightforward to:
\begin{itemize}
\item determine its topological type,
\item compute its algebraic intersection number with an edge, and  
\item verify that it is connected.
\end{itemize}

The aim of this paper is to show that a small collection of basic moves can be used to rapidly transform a given triangulation into a $\gamma$--simple one. The key result for achieving this is:

\begin{restate}{Theorem}{thrm:main}
Let $D \defeq 80 \zeta B (10B + 1)^C$ where $B \defeq 5^{2\zeta}$ and $C \defeq 2^{2\zeta}$. If $\intersection(\gamma, \calT) > D$ then there is a triangulation $\calT'$ such that either:
\begin{itemize}
\item $\calT$ and $\calT'$ differ by a flip, or
\item $\calT' = T_\delta^k(\calT)$ where $|k| \leq \intersection(\gamma, \calT)$ and $\intersection(\delta, \calT) \leq 2 \zeta$
\end{itemize}
and $\intersection(\gamma, \calT') \leq (1 - 1/D) \intersection(\gamma, \calT)$. \qed
\end{restate}


Using this, as we can reduce $\intersection(\gamma, \calT)$ by a definite fraction by performing flips and Dehn twists, we can convert $\calT$ to a $\gamma$--simple one in only $O(\log(\intersection(\gamma, \calT)))$ such moves. This result mimics several similar simplification results in other models of curves on surfaces. For example in:
\begin{itemize}
\item interval permutations by Agol, Hass and Thurston \cite[Section~4]{AHT},
\item the street complex by Erickson and Nayyeri \cite{EricksonNayyeri},
\item straight line programs by Schaefer, Sedgwick and \v{S}tefankovi\v{c} \cite{SchaeferSedgwick}, and
\item Dynnikov coordinates on a punctured disk \cite{DynnikovBraids}.
\end{itemize}

\section{Flips}
\label{sec:flips}

The first basic operation that we will consider in order to produce a simpler triangulation with respect to $\gamma$ is a \emph{flip}.

We say that an edge of $\calT$ is \emph{flippable} if it is contained in two distinct triangles. If $e$ is such an edge then we may flip it to obtain a new triangulation $\calT'$ as shown in Figure~\ref{fig:flip}.

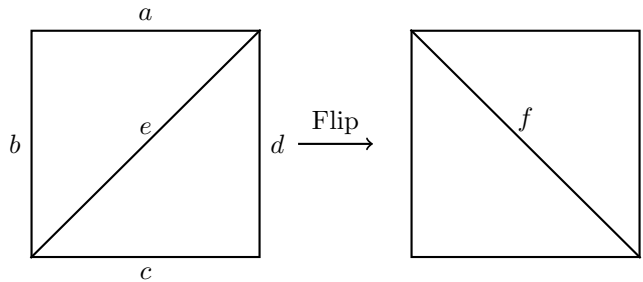
\begin{figure}[ht]
\centering
\begin{tikzpicture}[scale=2,thick]

\node (rect) at (-1.5, 0) [draw,minimum width=3cm,minimum height=3cm] {};
\node (rect2) at (1, 0) [draw,minimum width=3cm,minimum height=3cm] {};

\draw (rect.south west) -- node [above] {$e$} (rect.north east);
\draw (rect2.north west) -- node [above, yshift=1] {$f$} (rect2.south east);

\node (a) at (rect.north) [anchor=south] {$a$};
\node (b) at (rect.west) [anchor=east] {$b$};
\node (c) at (rect.south) [anchor=north] {$c$};
\node (d) at (rect.east) [anchor=west] {$d$};

\draw [thick,->] ($(rect.east)!0.25!(rect2.west)$) -- node[above] {Flip} ($(rect.east)!0.75!(rect2.west)$);
\end{tikzpicture}
\caption{Flipping an edge of a triangulation.}
\label{fig:flip}
\end{figure}

The number of intersections between $\gamma$ and the new edge $f$ is exactly determined by the number of intersections between $\gamma$ and the neighbouring edges of $e$.

\begin{proposition}[{\cite[Page~30]{MosherFoliations}}]
\label{prop:flip_intersection_laminations}
Suppose that $\gamma$ is a curve and $e$ is a flippable edge of a triangulation $\calT$ as shown in Figure~\ref{fig:flip} then
\[ \intersection(\gamma, f) = \max(\intersection(\gamma, a) + \intersection(\gamma, c), \intersection(\gamma, b) + \intersection(\gamma, d)) - \intersection(\gamma, e). \inlineQED \]
\end{proposition}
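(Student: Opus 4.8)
The plan is to isotope $\gamma$ into minimal position with respect to the $1$--skeleton of $\calT$, so that all six of $\intersection(\gamma, a), \dots, \intersection(\gamma, f)$ are realised simultaneously, and then to work entirely inside the square $Q$ formed by the two triangles adjacent to $e$ (equivalently, the two triangles adjacent to $f$). Since $a, b, c, d$ are the four sides of $\partial Q$ and $e, f$ are its two diagonals, it suffices to count the components of $\gamma \cap Q$. In minimal position each component is an embedded arc joining two distinct sides of $\partial Q$, and no component crosses a given diagonal more than once, since a second crossing would produce a bigon in the disk $Q$ and hence contradict minimality. I would then classify the arcs by the unordered pair of sides they join: there are exactly six types, the four ``corner'' types $ab$, $bc$, $cd$, $da$ and the two ``diagonal'' types $ac$ and $bd$, and I write $n_{ab}, n_{bc}, n_{cd}, n_{da}, n_{ac}, n_{bd}$ for the numbers of arcs of each type.

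The key observation, which is where the maximum comes from, is that an $ac$--arc and a $bd$--arc must intersect inside the disk $Q$, so embeddedness of $\gamma$ forces $n_{ac} \cdot n_{bd} = 0$. With this in hand I would read the six intersection numbers directly off Figure~\ref{fig:flip}: each side of $\partial Q$ meets precisely the arcs of the two corner types and the one diagonal type that contain it, while $e$ (respectively $f$) meets precisely those arcs that join a side of one of its two adjacent triangles to a side of the other. This gives
\[ \intersection(\gamma, f) = n_{ab} + n_{cd} + n_{ac} + n_{bd}, \qquad \intersection(\gamma, e) = n_{da} + n_{bc} + n_{ac} + n_{bd}, \]
together with $\intersection(\gamma, a) + \intersection(\gamma, c) = N + 2 n_{ac}$ and $\intersection(\gamma, b) + \intersection(\gamma, d) = N + 2 n_{bd}$, where $N \defeq n_{ab} + n_{bc} + n_{cd} + n_{da}$.

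Substituting these into the right--hand side of the asserted identity reduces it to $n_{ab} + n_{cd} + \bigl( 2\max(n_{ac}, n_{bd}) - n_{ac} - n_{bd} \bigr) = n_{ab} + n_{cd} + |n_{ac} - n_{bd}|$, and since $n_{ac} n_{bd} = 0$ we have $|n_{ac} - n_{bd}| = n_{ac} + n_{bd}$, so the expression equals $\intersection(\gamma, f)$, as required. I expect the only genuinely delicate step to be the first one --- checking that $\gamma$ admits a single position realising all six edge--intersection numbers and in which $\gamma \cap Q$ has the stated normal form (no arc returning to a side, no arc meeting a diagonal twice); everything after that is bookkeeping with the six counts. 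This is precisely the standard normal--coordinate theory for curves on an ideally triangulated surface, and for measured laminations the proposition is a piecewise--linear identity in the normal coordinates, which is the form in which it appears in \cite{MosherFoliations}.
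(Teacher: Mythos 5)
Your argument is correct, and it is exactly the standard normal--coordinates computation that the paper is invoking: the paper gives no proof of this proposition at all, deferring entirely to the citation of Mosher, and your six arc types, the observation that $n_{ac} \, n_{bd} = 0$ because an $ac$--arc and a $bd$--arc must cross inside the disk, and the resulting identity $2\max(n_{ac}, n_{bd}) - n_{ac} - n_{bd} = n_{ac} + n_{bd}$ are precisely how that computation goes. You also correctly identify the one point needing care, namely that a single normal position of $\gamma$ realises all six intersection numbers simultaneously (an arc whose endpoints lie in different complementary triangles of a diagonal must cross it exactly once, and no bigons arise), so the proof is complete as written.
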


\subsection{The flip graph}

The \emph{flip graph} $G = G(S)$ is the graph with a vertex for each triangulation of $S$ where two vertices are connected via an edge of length $1$ if they differ by a flip. The flip graph is connected \cite{HatcherTriangulations} \cite[Page~36]{MosherFoliations} and so we may use a sequence of flips to convert $\calT$ to a $\gamma$--simple triangulation. To help us find such a sequence we recall the following lemma:

\begin{lemma}[{\cite[Lemma~2.4.3]{BellThesis}}]
\label{lem:drop_intersection}
If $\intersection(\gamma, \calT) > 2\zeta$ then there is an edge of $\calT$ which can be flipped in order to reduce the intersection number. \qed
\end{lemma}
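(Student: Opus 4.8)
The plan is to turn a flip into an arithmetic operation by means of Proposition~\ref{prop:flip_intersection_laminations} and then to show a suitable edge must exist by a counting argument. Write $\intersection(\gamma,\calT)$ for the total intersection number $\sum_{e'}\intersection(\gamma,e')$ over the $\zeta$ edges of $\calT$. By Proposition~\ref{prop:flip_intersection_laminations}, flipping a flippable edge $e$ as in Figure~\ref{fig:flip} changes this quantity by
\[
\intersection(\gamma,f)-\intersection(\gamma,e)=\max\bigl(\intersection(\gamma,a)+\intersection(\gamma,c),\,\intersection(\gamma,b)+\intersection(\gamma,d)\bigr)-2\,\intersection(\gamma,e),
\]
since only the weight of $e$ changes. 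Hence it suffices to produce a flippable edge $e$ with $2\,\intersection(\gamma,e)>\intersection(\gamma,a)+\intersection(\gamma,c)$ and $2\,\intersection(\gamma,e)>\intersection(\gamma,b)+\intersection(\gamma,d)$, where $\{a,c\}$ and $\{b,d\}$ are the two pairs of opposite sides of the square in which $e$ is a diagonal.

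To produce such an edge I would pass to corner coordinates. Put $\gamma$ in minimal position; in each triangle the arcs of $\gamma$ then come in three types, each type cutting off one corner, and the weight of an edge is the sum of the weights of the two corners adjacent to it. Consequently $\intersection(\gamma,\calT)=\sum_v c_v$, where $c_v$ is the number of arcs cutting off the corner $v$ and the sum runs over all corners. An Euler characteristic count gives that $\calT$ has $2\zeta/3$ triangles and hence exactly $2\zeta$ corners, so the hypothesis $\intersection(\gamma,\calT)>2\zeta$ forces $c_{v_0}\ge 2$ for some corner $v_0$. Substituting into the displayed formula re-expresses the criterion of the first paragraph purely in terms of corner weights: flipping $e$ strictly reduces $\intersection(\gamma,\calT)$ exactly when the sum of the weights of the two corners opposite $e$ (one in each of its two triangles) is strictly smaller than the sum of the weights of the two corners incident to $e$ at each of its two endpoints.

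It remains to locate such an edge, and here I would argue by contradiction. Suppose no flip reduces $\intersection(\gamma,\calT)$; then the corner-weight criterion above holds with its conclusion reversed at every flippable edge of $\calT$. Starting from the corner $v_0$ with $c_{v_0}\ge 2$, I would use this to follow the outermost of the arcs cutting off $v_0$ from triangle to triangle around the puncture at which $v_0$ lies, checking at each step that it remains an outermost corner-cutting arc, and conclude that these arcs close up into a simple closed curve encircling that puncture; that curve is then a peripheral — hence inessential — component of $\gamma$, contradicting that $\gamma$ is an essential curve. I expect this last step to be the main obstacle. The clean case is when every corner at the puncture is met by at least one arc, where the chaining is immediate (and in fact $\gamma$ falls into several parallel loops around the puncture); the real work is to control the corners of weight zero, where the chain can jump, and to handle the degenerate configurations in which the relevant edges are not flippable — a triangle glued to itself along an edge, or an edge meeting itself at $v_0$ — by flipping a neighbouring edge of the same triangle and running the argument there instead.
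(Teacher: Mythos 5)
The paper does not actually prove this lemma (it is quoted from \cite{BellThesis}), so I am comparing your proposal against the standard argument. Your first two paragraphs are a correct and standard reduction: by Proposition~\ref{prop:flip_intersection_laminations} a flip at $e$ changes the total weight by $\max(\ai+\ci,\bi+\di)-2\ei$, so one needs an edge with $2\ei>\ai+\ci$ and $2\ei>\bi+\di$; and the count of $2\zeta$ corners correctly extracts a corner of weight at least $2$ from the hypothesis $\intersection(\gamma,\calT)>2\zeta$. (A small imprecision: writing $P,P'$ for the corners opposite $e$ and $\{Q,R'\}$, $\{R,Q'\}$ for the pairs of corners at its two endpoints, the criterion is $P+P'<Q+Q'$ \emph{and} $P+P'<R+R'$ --- each comparison takes one corner from \emph{each} endpoint --- not a single comparison against ``the corners incident at each endpoint''.)

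The third paragraph is where the entire content of the lemma lives, and it is not a proof but an acknowledged open step. Concretely: the chaining argument you describe only ever uses that $v_0$ carries \emph{some} arc --- you follow a single arc around the puncture and never explain where $c_{v_0}\geq 2$ (equivalently, where the threshold $2\zeta$) enters. If the chaining worked under that weaker hypothesis it would prove that every essential curve admits a weight-reducing flip, which is false: take $\calT$ minimising $\intersection(\gamma,\cdot)$ over all triangulations; no flip reduces, yet $\intersection(\gamma,\calT)\geq 2$ so some corner has positive weight. So the multiplicity-two hypothesis must be used exactly at the step you defer, namely controlling the corners of weight zero where the chain around the puncture breaks, and the proposal supplies no mechanism for this. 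Two further signs that the mechanism has not been thought through: it is the \emph{innermost} arcs (closest to the puncture), not the outermost ones, that continue consecutively from corner to corner around a puncture; and the contradiction you aim for (a peripheral component of $\gamma$) is the right target, but reaching it is precisely the part that is missing. As it stands the proposal is a correct setup plus the statement that the hard part remains to be done.
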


Similar results are also known for other measures of the complexity of $\gamma$ \cite{PennerTropical}\cite[Page~39]{MosherFoliations}.
We may use Lemma~\ref{lem:drop_intersection} repeatedly to monotonically reduce $\intersection(\gamma, \calT)$ until we reach a $\gamma$--simple triangulation and so deduce:

\begin{corollary}
For each $\calT \in G$ and curve $\gamma$ there is a $\gamma$--simple triangulation $\calT' \in G$ such that $d(\calT, \calT') \in O(\intersection(\gamma, \calT))$. \qed
\end{corollary}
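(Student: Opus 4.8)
The plan is to iterate Lemma~\ref{lem:drop_intersection} and simply count the flips. Set $\calT_0 \defeq \calT$. As long as $\intersection(\gamma, \calT_i) > 2\zeta$, the lemma supplies a flippable edge of $\calT_i$ whose flip produces a triangulation $\calT_{i+1}$ with $\intersection(\gamma, \calT_{i+1}) < \intersection(\gamma, \calT_i)$. Since intersection numbers are non-negative integers, each such step decreases this quantity by at least $1$, so after at most $\intersection(\gamma, \calT) - 2\zeta$ steps the process must reach a triangulation $\calT'$ with $\intersection(\gamma, \calT') \leq 2\zeta$; by definition $\calT'$ is then $\gamma$--simple.

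It remains to bound $d(\calT, \calT')$. Each step of the iteration is a single flip, so $\calT_i$ and $\calT_{i+1}$ are adjacent in $G$, and concatenating these edges gives a path from $\calT$ to $\calT'$ whose length equals the number of steps performed. Hence $d(\calT, \calT') \leq \intersection(\gamma, \calT) - 2\zeta \leq \intersection(\gamma, \calT)$, which is $O(\intersection(\gamma, \calT))$ as claimed.

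There is essentially no real obstacle here: the argument is a monotone-descent argument, and the only point worth stating carefully is that the strict decrease at each step, together with the lower bound $2\zeta$ and the well-ordering of the non-negative integers, is what forces termination — we do not even need connectivity of $G$ for this direction, only repeated application of Lemma~\ref{lem:drop_intersection}. (Implicitly one uses that at each stage the relevant edge is still flippable in the configuration of Figure~\ref{fig:flip}, but that is precisely what the lemma guarantees at $\calT_i$.) Note also that this bound is linear, whereas Theorem~\ref{thrm:main} is what will later upgrade it to a logarithmic bound once powers of Dehn twists are allowed in addition to flips.
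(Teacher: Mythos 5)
Your argument is exactly the paper's: repeatedly apply Lemma~\ref{lem:drop_intersection}, note that each flip strictly decreases the non-negative integer $\intersection(\gamma, \calT)$, and count the resulting length-one edges in $G$. The proposal is correct and matches the intended proof.
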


Unfortunately there are cases in which at least $\intersection(\gamma, \calT)$ flips are required in order to obtain a $\gamma$--simple triangulation. For example, on the triangulation of the once punctured torus shown in Figure~\ref{fig:slow_flips} the curve of slope $k$ has geometric intersection number $\approx k$ while the nearest $\gamma$--simple triangulation is $\approx k$ away.

\begin{figure}[ht]
\centering
\begin{tikzpicture}[scale=2,thick]

\tikzset{dot/.style={shape=circle,fill=black,scale=0.3}}
\tikzset{tri/.style={draw,scale=0.5,fill=white,regular polygon,regular polygon sides=3}}

\node (rect) at (-1.5, 0) [draw,minimum width=3cm,minimum height=3cm] {};

\draw (rect.south west) -- (rect.north east);

\draw [red] ($(rect.north west)!0.2!(rect.north east)$) to [out=270,in=0] (rect.west);
\draw [red] ($(rect.south east)!0.2!(rect.south west)$) to [out=90,in=180] (rect.east);

\foreach \i in {0.2,0.22,...,0.8}
	\draw [red] let \n1 = {\i+0.02} in ($(rect.south west)!\i!(rect.south east)$) to [out=90,in=270] ($(rect.north west)!\n1!(rect.north east)$);

\node [tri, rotate=30] at (rect.south) {};
\node [tri, rotate=30] at (rect.north) {};
\node [tri, yshift=-5] at (rect.west) {};
\node [tri, yshift=5] at (rect.west) {};
\node [tri, yshift=-5] at (rect.east) {};
\node [tri, yshift=5] at (rect.east) {};

\draw [decorate,decoration={brace,amplitude=8pt,mirror}] ($(rect.south west)!0.15!(rect.south east)$) -- ($(rect.south west)!0.85!(rect.south east)$) node [midway,yshift=-5mm] {$k$};

\end{tikzpicture}
\caption{This triangulation is far from a $\gamma$--simple one in $G$.}
\label{fig:slow_flips}
\end{figure}
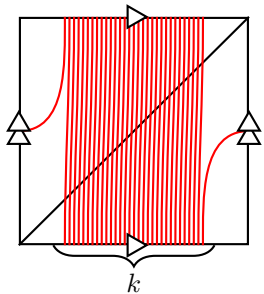

Such examples arise by performing large powers of Dehn twists. In the next section we will show that in fact these twists are the only way to create such an obstruction.

\section{Twists}

To deal with triangulations which need a large number of flips in order to simplify them we introduce a second type of move: the \emph{Dehn twist} $T_\delta^k$ \cite[Chapter~3]{FM}. This move cuts the surface open along the curve $\delta$ and rotates one of the boundary components $k$ times to the right (or $|k|$ times to the left if $k$ is negative) before regluing the boundary components together. We will show that if flips cannot decrease $\intersection(\gamma, \calT)$ by a definite fraction then a power of a Dehn twist can.

To do this, suppose that $\calT$ is a fixed triangulation. Suppose that $\gamma$ is a fixed curve and assume that flipping any edge of $\calT$ reduces $\intersection(\gamma, \calT)$ by at most $m$. Fix $\emax$ to be an edge of $\calT$ which meets $\gamma$ the most, that is, such that
\[ E \defeq \intersection(\gamma, \emax) \geq \intersection(\gamma, e) \]
for every edge $e$ of $\calT$. Additionally fix a coorientation $\emax$, that is, a choice of unit normal vector field to $\emax$.

Abusing notation slightly, let us think of $\gamma$ as a representative of its isotopy class which is in minimal position with respect to $\calT$. Let $P \defeq \gamma \cap \calT$.

\subsection{Insulation}

\begin{definition}
Suppose that $p \in P$ lies on the edge $e$ of $\calT$. Then $p$ is \emph{$k$--insulated} if each component of $e - p$ contains at least $k$ other points in $P$. That is, if looking along $e$ there are at least $k$ points in $P$ on either side of $p$. For example, see Figure~\ref{fig:insulation}.
\end{definition}

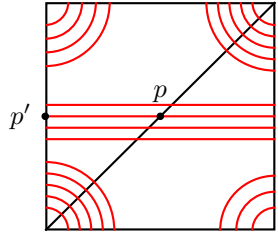
\begin{figure}[ht]
\centering
\begin{tikzpicture}[scale=2,thick]

\tikzset{dot/.style={shape=circle,fill=black,scale=0.3}}

\node (rect) at (-1.5, 0) [draw,minimum width=3cm,minimum height=3cm] {};

\draw (rect.south west) -- (rect.north east);

\foreach \i in {0.1,0.16,...,0.3}
	\draw [red] ($(rect.north west)!\i!(rect.south west)$) to [out=0,in=270] ($(rect.north west)!\i!(rect.north east)$);
\foreach \i in {0.1,0.15,...,0.3}
	\draw [red] ($(rect.south west)!\i!(rect.north west)$) to [out=0,in=90] ($(rect.south west)!\i!(rect.south east)$);
\foreach \i in {0.1,0.15,...,0.3}
	\draw [red] ($(rect.north east)!\i!(rect.north west)$) to [out=270,in=180] ($(rect.north east)!\i!(rect.south east)$);
\foreach \i in {0.1,0.17,...,0.3}
	\draw [red] ($(rect.south east)!\i!(rect.south west)$) to [out=90,in=180] ($(rect.south east)!\i!(rect.north east)$);

\foreach \i in {0.45,0.5,0.55,0.6}
	\draw [red] ($(rect.north west)!\i!(rect.south west)$) to [out=0,in=180] ($(rect.north east)!\i!(rect.south east)$);

\node [dot] at (rect.center) [label={$p$}] {};
\node [dot] at (rect.west) [label=left:{$p'$}] {};

\end{tikzpicture}
\caption{A $6$--insulated point $p \in P$ and adjacent $5$--insulated point $p' \in P$.}
\label{fig:insulation}
\end{figure}

We say that $p, p' \in P$ are \emph{adjacent} if they appear consecutively along $\gamma$. For example, again see Figure~\ref{fig:insulation}. The key property of insulation is that it only decays slightly when we move to an adjacent point.

\begin{lemma}
\label{lem:adjacent_insulation}
Suppose that $p \in P$ and that $p' \in P$ is an adjacent point. If $p$ is $k$--insulated then $p'$ is $(5k - (2E + m - 2))$--insulated.
\end{lemma}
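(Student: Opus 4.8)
The plan is to track how the insulation bound on $p$ propagates to the adjacent point $p'$ by following the arc of $\gamma$ joining $p$ to $p'$ across a single triangle of $\calT$. Say $p$ lies on edge $e$ and $p'$ lies on edge $e'$, where $e$ and $e'$ are two sides of a common triangle $t$ with third side $e''$. The arc from $p$ to $p'$ cuts off a corner of $t$ at the vertex shared by $e$ and $e'$; on the far side of this arc inside $t$ there sit some number of parallel arcs of $\gamma$ entering $e$ and exiting $e'$, and these contribute equally to the insulation counts on both sides. The first step is to set up this local picture carefully and name the relevant sub-counts: the number $x$ of points of $P$ on $e$ strictly between $p$ and the shared vertex, the number $y$ of points on $e$ on the other side of $p$, and analogously $x', y'$ for $p'$ on $e'$.

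Next I would use Proposition~\ref{prop:flip_intersection_laminations}-style combinatorics, or rather the simpler bookkeeping that underlies it, to relate these counts. The arcs in the corner triangle cut off at the shared vertex are exactly the arcs that run from $e$ directly to $e'$; call their number $n$. Then the points on $e$ on the vertex-side of $p$ that belong to such corner arcs number at most $n$, and similarly on $e'$; so $x' \geq n - (\text{stuff})$ and the count on the far side of $p'$ is controlled by the total intersection with $e'$. The key inequalities will come from: (i) $\intersection(\gamma,e) \leq E$ and $\intersection(\gamma, e'') \leq E$, bounding how many points can ``hide'' on an edge; and (ii) the hypothesis that flipping $e''$ (or $e$, or $e'$) reduces $\intersection(\gamma,\calT)$ by at most $m$, which via Proposition~\ref{prop:flip_intersection_laminations} forces $\intersection(\gamma, e'') + \intersection(\gamma, f'') \geq$ (sum of two opposite neighbours) $- m$ type relations, preventing the corner arc count $n$ from being too small relative to the points we need to account for. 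Combining these, if $p$ is $k$--insulated — so $\min(x,y) \geq k$ — I want to chase through that $\min(x', y') \geq 5k - (2E + m - 2)$.

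Concretely, the factor $5$ should appear because on the way from $p$ to $p'$ the count can be amplified across the triangle: the arcs on the ``protected'' side of $p$ on $e$ split into those going to $e'$ and those going to $e''$, and iterating the corner-arc argument on $e''$ (whose intersection number is also $\leq E$) lets one bound the loss additively rather than multiplicatively. I expect the bound to be assembled as: starting from $k$ points protecting $p$, at most $2E$ of them are ``lost'' to being on the wrong side after crossing (one $E$ for each of the two edges $e$ and $e''$ whose point-counts can absorb protected points), and the $m - 2$ term is the slack coming from the no-good-flip hypothesis applied to the edge being crossed, with the $5$ arising from a triangle-counting inequality of the form (points protecting $p'$) $\geq$ (something like) $(k - E - \ldots)/\text{(small constant)}$ rearranged — or, more likely in the author's setup, from summing the protection over the (at most five) edges and corners in the two triangles adjacent to the arc and using that each individual edge carries $\leq E$ points.

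The main obstacle will be getting the corner-arc bookkeeping exactly right: one must be careful about which points of $P$ on $e$ lie on arcs that cross into the corner triangle versus arcs that go the other way out of $e$, and about the orientation/side conventions so that ``the side of $p'$ facing the shared vertex'' is correctly matched with ``the side of $p$ facing the shared vertex.'' There is also a subtlety if $p$ or $p'$ is very close to an endpoint of its edge (few points on one side), but that is exactly the case where $k$ is small and the claimed bound $5k - (2E+m-2)$ is negative or trivial, so it costs nothing. I would handle the generic case first with the explicit arc count, then remark that the degenerate cases make the inequality vacuous. The constant $5$ and the precise error $2E + m - 2$ will have to be extracted from the worst arrangement of the $n$ corner arcs together with Proposition~\ref{prop:flip_intersection_laminations} applied to the flip of the crossed edge; I would expect to need to invoke that proposition once or twice and then just carefully add up the contributions.
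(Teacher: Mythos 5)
There is a genuine gap: you never actually derive the coefficient $5$ or the error term $2E+m-2$; you only offer guesses for where they might come from ("amplification across the triangle", "summing over the at most five edges and corners"), and neither guess matches a mechanism that works. The paper's argument runs as follows. Work in the quadrilateral formed by the two triangles adjacent to the edge $e$ containing $p$ (not a single triangle with third side $e''$), and apply the flip formula to $e$ itself: the no-good-flip hypothesis gives $\intersection(\gamma,f) \geq \intersection(\gamma,e) - m$, and combined with the maximality $\intersection(\gamma,\cdot)\leq E$ this pins down each side of the quadrilateral to lie in $[2\intersection(\gamma,e)-(m+E),\,E]$. Writing $W,X,Y,Z$ for the numbers of corner arcs of the quadrilateral, one then gets $|X-W|,\ |Y-Z| \leq m + 2(E - \intersection(\gamma,e))$, so passing from $p$ to the adjacent $p'$ loses at most $m+2(E-\intersection(\gamma,e))$ insulation, i.e.\ $p'$ is $\bigl(k - m - 2E + 2\intersection(\gamma,e)\bigr)$--insulated. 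The step you are missing entirely is the last one: $k$--insulation of $p$ forces $\intersection(\gamma,e)\geq 2k+1$, and substituting this turns the bound into $5k-(2E+m-2)$. Without that observation you can at best reach a bound depending on $\intersection(\gamma,e)$, never one of the form $5k - (\text{const})$, so the "5" cannot be extracted by the routes you sketch.

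Two smaller issues: your claim that the degenerate cases (few points on one side of $p$) make the inequality vacuous is backwards in spirit --- the useful regime is precisely when $k$ is large, and the bound $5k-(2E+m-2)$ must be proved there, which is where the $\intersection(\gamma,e)\geq 2k+1$ step is essential rather than avoidable. And you do not treat the case where $e$ is non-flippable (contained in only one triangle); the paper handles this separately by noting that $p'$ then lies on the bounding edge and is at least $k$--insulated, with $k \geq 5k-(2E+m-2)$ holding because the right-hand side is the smaller quantity in that regime.
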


\begin{proof}
For convenience we will use the notation $\mathbf{x} \defeq \intersection(\gamma, x)$ here.

We begin by considering the case in which $p$ lies on a flippable edge $e$. Without loss of generality, following the notation of Figure~\ref{fig:flip}, we may assume that $\bi + \di \geq \ai + \ci$.

Recall that by Proposition~\ref{prop:flip_intersection_laminations} we have that
\[ \bi + \di - \ei = \ffi \geq \ei - m. \]
Now if $\bi < 2 \ei - (m + E)$ then
\[ E \geq \di \geq 2\ei - \bi - m > 2\ei - (2 \ei - (m + E)) - m = E, \]
a contradiction. By symmetry, the same inequality holds for $\di$ and so by combining this with the fact that $\bi, \di \leq E$ we have that
\begin{equation}
\label{eqn:insulation}
2 \ei - (m + E) \leq \bi, \di \leq E.
\end{equation}

\begin{figure}[ht]
\centering
\begin{tikzpicture}[scale=2,thick]

\node (rect) at (-1.5, 0) [draw,minimum width=3cm,minimum height=3cm] {};

\draw (rect.south west) -- (rect.north east);

\foreach \i in {0.1,0.12,...,0.3}
	\draw [red] ($(rect.north west)!\i!(rect.south west)$) to [out=0,in=270] ($(rect.north west)!\i!(rect.north east)$);
\foreach \i in {0.1,0.13,...,0.3}
	\draw [red] ($(rect.south west)!\i!(rect.north west)$) to [out=0,in=90] ($(rect.south west)!\i!(rect.south east)$);
\foreach \i in {0.1,0.115,...,0.3}
	\draw [red] ($(rect.north east)!\i!(rect.north west)$) to [out=270,in=180] ($(rect.north east)!\i!(rect.south east)$);
\foreach \i in {0.1,0.12,...,0.3}
	\draw [red] ($(rect.south east)!\i!(rect.south west)$) to [out=90,in=180] ($(rect.south east)!\i!(rect.north east)$);

\foreach \i in {0.4,0.45,0.5,0.55,0.6}
	\draw [red] ($(rect.north west)!\i!(rect.south west)$) to [out=0,in=180] ($(rect.north east)!\i!(rect.south east)$);

\draw [decorate,decoration={brace,amplitude=5pt}] ($(rect.north west)!0.05!(rect.north east)$) -- ($(rect.north west)!0.35!(rect.north east)$) node [midway,yshift=5mm] {$X$};
\draw [decorate,decoration={brace,amplitude=5pt}] ($(rect.north west)!0.65!(rect.north east)$) -- ($(rect.north west)!0.95!(rect.north east)$) node [midway,yshift=5mm] {$W$};
\draw [decorate,decoration={brace,amplitude=5pt}] ($(rect.south east)!0.05!(rect.south west)$) -- ($(rect.south east)!0.35!(rect.south west)$) node [midway,yshift=-5mm] {$Z$};
\draw [decorate,decoration={brace,amplitude=5pt}] ($(rect.south east)!0.65!(rect.south west)$) -- ($(rect.south east)!0.95!(rect.south west)$) node [midway,yshift=-5mm] {$Y$};

\node at (rect.center) [above] {\contour*{white}{$e$}};

\end{tikzpicture}
\caption{Curves in the corners of the square about $e$.}
\label{fig:corners}
\end{figure}
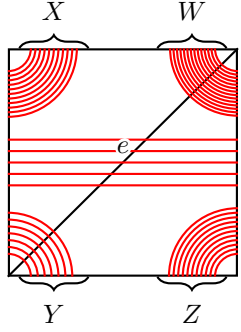

Let $W$, $X$, $Y$ and $Z$ be the number of times that $\gamma$ runs around each of the corners of the square about $e$, as shown in Figure~\ref{fig:corners}. Using this notation we then have that
\[ -m - 2(E - \ei) \leq \bi - \di = (X + Y) - (Z + W) \leq m + 2 (E - \ei) \]
by \eqref{eqn:insulation}. Additionally, by \eqref{eqn:insulation} we also have that
\[ -m \leq \ffi - \ei = (X + Z) - (W + Y) \leq 2 (E - \ei). \]
By adding and subtracting these inequalities we discover that
\[ |X - W| \leq m + 2 (E - \ei) \inlineand |Y - Z| \leq m + 2 (E - \ei). \]

Now let $e'$ be the edge containing $p'$. All of the points of $P$ on one component of $e - p$ are adjacent to points on one of the components of $e' - p'$. However, as $X \approx W$ and $Y \approx Z$, the other component of $e' - p'$ can only have $m + 2(E - \ei)$ fewer points of $P$ than the other component of $e - p$. Hence $p'$ must be $(k - m - 2E + 2\ei)$--insulated.

Finally note that as $p$ is $k$--insulated we must have that $\ei \geq 2k + 1$. Therefore $p'$ is $(5k - (2E + m - 2))$--insulated as required.

On the other hand, suppose that $p$ lies on a non-flippable edge $e$. In this case $p'$ must lie on the bounding edge $e'$, as shown in Figure~\ref{fig:non_flip}.

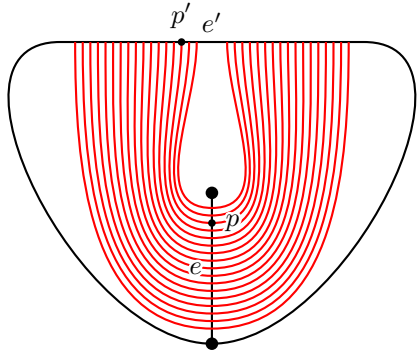
\begin{figure}[ht]
\centering
\begin{tikzpicture}[scale=2,thick]

\tikzset{dot/.style={shape=circle,fill=black,scale=0.3}}
\tikzset{largedot/.style={shape=circle,fill=black,scale=0.5}}

\coordinate (B) at (0, 0); \coordinate (C) at (0, 1); \coordinate (T) at (0,2);
\coordinate (TL) at (-1,2); \coordinate (TR) at (1,2);

\draw (B) -- (C);
\draw (B) to [out=0,in=0] (TR) to [out=180,in=0] (TL) to [out=180,in=180] (B);

\foreach \i in {0.1,0.15,...,0.9}
	\draw [red]
		($(T)!\i!(TR)$)
		to [out=270,in=0] ($(C)!\i!(B)$)
		to [out=180,in=270] ($(T)!\i!(TL)$);

\node [largedot] at (B) {};
\node [largedot] at (C) {};

\node [dot] at ($(C)!0.2!(B)$) [label=right:{\contour*{white}{$p$}}] {};
\node [dot] at ($(T)!0.2!(TL)$) [label=above:{$p'$}] {};

\node [left] at ($(B)!0.5!(C)$) {\contour*{white}{$e$}};
\node [above] at (T) {$e'$};

\end{tikzpicture}
\caption{If $p$ lies on a non-flippable edge $e$ then $p'$ is at least as insulated as $p$.}
\label{fig:non_flip}
\end{figure}

It follows immediately that if $p$ is $k$--insulated then $p'$ is $k$--insulated too. As $k \geq 5k - (2E + m - 2)$ the result also holds in this case.
\end{proof}

For convenience, let $A \defeq \frac{2E + m - 2}{4}$. The next corollary follows from solving the difference equation:
\[ k_{i+1} = 5k_{i} - 4A. \]

\begin{corollary}
\label{cor:adjacent_insulation}
Suppose that $p_0, p_1, \ldots$ is a sequence of pairwise adjacent points. If $p_0$ is $k$--insulated then $p_i$ is $(A - (A - k)5^i)$--insulated. \qed
\end{corollary}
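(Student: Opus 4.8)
The plan is to iterate Lemma~\ref{lem:adjacent_insulation} along the sequence and then read off the bound by solving the stated linear recurrence in closed form. The one preliminary observation I would make is that insulation is monotone in its parameter: if a point is $k$--insulated then it is automatically $k'$--insulated for every $k' \leq k$, so the notion extends harmlessly to non-integer and even non-positive $k$ (becoming vacuous once $k \leq 0$). Since the map $x \mapsto 5x - (2E + m - 2) = 5x - 4A$ appearing in Lemma~\ref{lem:adjacent_insulation} is increasing, this monotonicity is exactly what licenses chaining the one--step estimate: carrying along only a lower bound on how insulated $p_i$ is still yields a valid lower bound for $p_{i+1}$.

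With that in hand I would set $k_0 \defeq k$ and $k_{i+1} \defeq 5 k_i - 4A$ and prove by induction on $i$ that $p_i$ is $k_i$--insulated. The base case is the hypothesis on $p_0$; for the inductive step, $p_i$ and $p_{i+1}$ are adjacent, so Lemma~\ref{lem:adjacent_insulation} gives that $p_{i+1}$ is $(5k_i - (2E + m - 2))$--insulated, that is, $k_{i+1}$--insulated.

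It then remains to solve $k_{i+1} = 5 k_i - 4A$. Its unique fixed point is $A$, since $5A - 4A = A$, so substituting $d_i \defeq k_i - A$ turns the recurrence into $d_{i+1} = 5 d_i$. Hence $d_i = 5^i d_0 = 5^i (k - A)$ and $k_i = A + 5^i (k - A) = A - (A - k) 5^i$, which is the claimed bound. I expect no real obstacle beyond this bookkeeping; the only step that genuinely needs the monotonicity remark rather than being pure algebra is the chaining of Lemma~\ref{lem:adjacent_insulation} in the induction, since the actual insulation of $p_i$ may well exceed the carried bound $k_i$.
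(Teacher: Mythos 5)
Your proof is correct and matches the paper's (unwritten) argument exactly: the paper simply remarks that the corollary follows from solving the difference equation $k_{i+1} = 5k_i - 4A$, which is precisely the induction-plus-fixed-point computation you carry out. Your monotonicity remark is a harmless (and accurate) bit of extra care; note that the definition of $k$--insulated is already a lower-bound condition ("at least $k$ points"), so the chaining goes through directly.
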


\subsection{Blocks}

We focus on the intersections between $\gamma$ and $\emax$, so define $Q \defeq P \cap \emax$. The following definition will be used extensively throughout the remainder of this section:

\begin{definition}
The \emph{chain} of $q \in Q$ is the sequence of pairwise adjacent points $p_0, p_1, \ldots, p_{2 \zeta}$, where $p_0 = q$, emanating from $\emax$ in the direction of its coorientation.
\end{definition}

If $p_0, p_1, \ldots, p_{2\zeta}$ is the chain of $q \in Q$ then we refer to the edge that $p_i$ lies on together with the coorientation with which the chain meets that edge at $p_i$ as the \emph{type} of $p_i$. Of course, as there are only $2\zeta$ different types, there is always a pair of points in the chain of $q \in Q$ of the same type.

We partition the points of $Q$ into subsets $Q_1, Q_2, \ldots, Q_C$ called \emph{blocks}. The block that $q \in Q$ is contained in is determined by the sequence of types of the points in its chain. We note that the points in each block are consecutive along $\emax$ and that there are at most $C \defeq 2^{2\zeta}$ blocks. This bound can be seen from the fact that each block is determined by the sequence of $2 \zeta$ left or right turns made by a representative chain in it and can almost certainly be improved.


Again for convenience, let $B \defeq 5^{2\zeta}$.

\begin{proposition}
\label{prop:first_return}
Suppose that $q \in Q$ is $k$--insulated and lies in the block $Q_n$. If $Q_n$ contains more than
\[ E - 2(A - AB + Bk) \]
points then the lexicographically smallest pair $(i, j)$ such that $p_i$ and $p_j$ in the chain of $q$ have the same type is of the form $(0, j)$.
\end{proposition}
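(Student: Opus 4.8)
The plan is to argue by contradiction. Suppose the lexicographically smallest pair $(i,j)$ with $p_i$ and $p_j$ of the same type has $i \geq 1$. Since $q$ lies in the block $Q_n$, every point of $Q_n$ has a chain with exactly the same sequence of types, and hence the same first--return pair $(i,j)$; in particular the types $t_0, \dots, t_i$ are pairwise distinct and $t_i = t_j$ for some $i < j \leq 2\zeta$. Write $e$ for the edge carrying the type $t_i = t_j$.

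First I would pin down the available insulation. As $q$ is $k$--insulated there are at least $k$ points of $P$ on either side of it along $\emax$, so $2k < E$ and hence $k < A$. Therefore the lower bound $A - (A-k)5^{\ell}$ supplied by Corollary~\ref{cor:adjacent_insulation} is non--increasing in $\ell$, and so both $p_i$ and $p_j$, lying at depth at most $2\zeta$ in the chain of $q$, are at least $\bigl(A - (A-k)5^{2\zeta}\bigr)$--insulated; that is, at least $\kappa$--insulated, where $\kappa \defeq A - AB + Bk$. If $\kappa \leq 0$ then $E - 2(A - AB + Bk) \geq E = \intersection(\gamma, \emax) \geq |Q_n|$, so the hypothesis of the proposition is vacuous; thus we may assume $\kappa > 0$.

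The heart of the argument is then to overload the edge $e$. Since the chain meets $e$ at $p_i$ and at $p_j$ with the same transverse orientation, the sub--arc $\beta \defeq [p_i, p_j]$ of the chain of $q$, together with the sub--arc of $e$ joining its endpoints, bounds a disk; as $\gamma$ is in minimal position with respect to $\calT$ this disk is not a bigon, so it contains a puncture $v$, and continuing the chain past $p_j$ reproduces $\beta$, so the chain winds repeatedly around $v$ --- exactly the configuration a power of a Dehn twist creates. Now two facts about $e$ come into play. First, two points of $Q_n$ consecutive along $\emax$ have parallel chains: the strip they span across a triangle picks up no further intersection points, by an edge--by--edge no--bigon argument in the style of Lemma~\ref{lem:adjacent_insulation}; consequently the depth--$i$ points of the chains of $Q_n$ form a run $S_i$ of $|Q_n|$ consecutive points of $\gamma \cap e$ containing $p_i$. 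Second, because $p_i$ is at least $\kappa$--insulated on $e$ and the neighbouring strands of $\gamma$ along $e$ have chains parallel to that of $p_i$, those strands also wind once around $v$, nested inside one another since $\gamma$ is embedded, and so recross $e$; tracking these nested windings produces at least $2\kappa$ points of $\gamma \cap e$ lying outside $S_i$. Hence $\intersection(\gamma, e) \geq |Q_n| + 2\kappa$, whereas $\intersection(\gamma, e) \leq E$ by the maximality of $\emax$, so $|Q_n| \leq E - 2\kappa = E - 2(A - AB + Bk)$, contradicting the hypothesis that $Q_n$ is larger.

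The step I expect to be the real work is the last one: organising the nested windings around $v$ carefully enough to be certain that the claimed $2\kappa$ recrossings of $e$ genuinely occur and are distinct from one another and from the $|Q_n|$ points of $S_i$. Two degenerate situations must also be handled: that $\beta$ re--enters $e$ at an interior point --- which, by minimality of $j$, can only occur with the opposite transverse orientation, so one may replace $\beta$ by an innermost loop --- and that $e$ coincides with $\emax$, in which case $t_j$ records the opposite coorientation to $t_0$ and the same count is run on $\emax$ itself. The remaining ingredients, namely the insulation estimate via Corollary~\ref{cor:adjacent_insulation} and the parallel--chain bookkeeping, should be routine.
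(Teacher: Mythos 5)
Your skeleton matches the paper's: argue the contrapositive, use Corollary~\ref{cor:adjacent_insulation} to conclude that $p_i$ and $p_j$ are both $\kappa$--insulated with $\kappa \defeq A - AB + Bk$ (your preliminary check that $k \leq A$, so that the bound only decays along the chain, is fine, as is the vacuous case $\kappa \leq 0$), and then bound $|Q_n|$ by counting points of $\gamma \cap e$ against $E$. But the counting step --- the one you yourself flag as ``the real work'' --- does not go through as described. You claim that the insulation of $p_i$, via ``nested windings around a puncture $v$'', yields $2\kappa$ points of $\gamma \cap e$ outside the run $S_i$. There are three problems. First, the loop formed by $\beta = [p_i, p_j]$ and a sub-arc of $e$ need not bound a disk: because the chain crosses $e$ at $p_i$ and at $p_j$ with the \emph{same} coorientation, this loop is exactly the candidate twisting curve $\delta$ of Theorem~\ref{thrm:main}, and it is typically essential (e.g.\ nonseparating on the once-punctured torus), so there is no puncture $v$ to wind around. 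Second, the $\kappa$ strands on either side of $p_i$ along $e$ guaranteed by insulation are arbitrary strands of $\gamma$; only the strands belonging to the chains of points of $Q_n$ are known to be parallel to the chain of $q$, so there is no reason the insulating strands recross $e$ at all. Third, even for the block's own chains the recrossing points are precisely the depth-$j$ run $S_j$, which has $|Q_n|$ points rather than $2\kappa$, and nothing prevents the points supplied by the insulation of $p_i$ from lying inside $S_i \cup S_j$, so they cannot simply be added to $|S_i|$.

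The paper's count is simpler and avoids all of this. Since every point of $Q_n$ has a chain with the same type sequence as that of $q$, the portion of $e$ strictly between $p_i$ and $p_j$ already contains the points of $S_i$ beyond $p_i$ and the points of $S_j$ before $p_j$, which together number $|Q_n| - 1$; hence if $d$ is the number of points of $P$ between $p_i$ and $p_j$ then $|Q_n| \leq d + 1$. One then applies insulation only to the two \emph{outer} components: the component of $e - p_i$ not containing $p_j$ and the component of $e - p_j$ not containing $p_i$ each carry at least $\kappa$ further points, whence $E \geq \intersection(\gamma, e) \geq d + 2 + 2\kappa$ and so $|Q_n| \leq E - 2(A - AB + Bk)$. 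Your decomposition ($|S_i|$ plus extra points) can be repaired, but only by invoking the insulation of \emph{both} $p_i$ and $p_j$ and subtracting the overlap with $S_i \cup S_j$; the insulation of $p_i$ alone, routed through the winding picture, does not suffice.
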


\begin{proof}
We will show the contrapositive. Suppose that $(i, j)$ is the lexicographically smallest pair such that $p_i$ and $p_j$ have the same type and that $i > 0$. Let $e$ be the edge containing $p_i$ and $p_j$.

\begin{figure}[ht]
\centering
\begin{tikzpicture}[scale=2,thick]

\tikzset{dot/.style={shape=circle,fill=black,scale=0.3}}

\coordinate (L) at (-1, 0); \coordinate (R) at (1, 0);
\coordinate (L2) at (-1, 1); \coordinate (R2) at (1, 1);
\coordinate (L3) at (1, 3); \coordinate (R3) at (1, 1);
\coordinate (L4) at (3, 1); \coordinate (R4) at (1, 1);
\coordinate (L5) at (1, -1); \coordinate (R5) at (1, 1);
\coordinate (L6) at (-1, 1); \coordinate (R6) at (1, 1);
\draw (L) -- (R);
\draw (L2) -- (R2);
\node [left] at (L) {$\emax$};
\node [left] at (L2) {$e$};

\foreach \i in {0.3,0.32,...,0.6}
	\draw [red]
		let \n1 = {\i+0.08} in
		let \n2 = {\i+0.16} in
		let \n3 = {\i+0.34} in
		($(L)!\i!(R)$)
		to [out=90,in=270] ($(L2)!\i!(R2)$)
		to [out=90,in=180] ($(L3)!\n1!(R3)$)
		to [out=0,in=90] ($(L4)!\n2!(R4)$)
		to [out=270,in=0] ($(L5)!\n3!(R5)$)
		to [out=180,in=270] ($(L6)!\n3!(R6)$);

\node [dot] at ($(L)!0.5!(R)$) [label={\contour*{white}{$q$}}] {};
\node [dot] at ($(L2)!0.5!(R2)$) [label={\contour*{white}{$p_i$}}] {};
\node [dot] at ($(L6)!0.84!(R6)$) [label={\contour*{white}{$p_j$}}] {};
\draw [decorate,decoration={brace,amplitude=5pt,mirror}] ($(L)!0.28!(R)$) -- ($(L)!0.6!(R)$) node [midway,yshift=-5mm] {$Q_n$};
\draw [decorate,decoration={brace,amplitude=5pt,mirror}] ($(L2)!0.52!(R2)$) -- ($(L2)!0.82!(R2)$) node [midway,yshift=-5mm] {\contour*{white}{$d$}};

\end{tikzpicture}
\caption{A block must be narrow if it meets itself but not on $\emax$.}
\label{fig:partition_chain}
\end{figure}
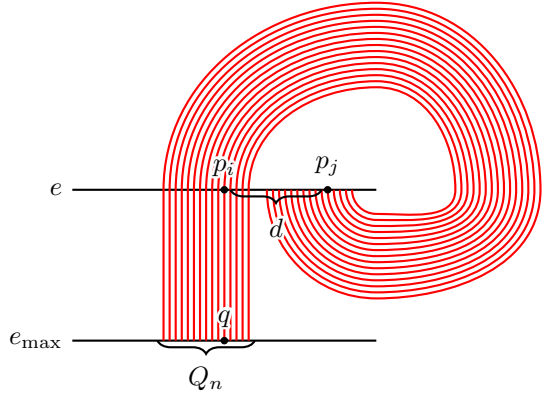

Suppose that there are $d$ points of $P$ between $p_i$ and $p_j$ along $e$. Note that, as shown in Figure~\ref{fig:partition_chain}, there are at most $d + 1$ points in $Q_n$. However, by Corollary~\ref{cor:adjacent_insulation}, $p_i$ and $p_j$ are both $(A - (A- k)B)$--insulated and so
\[ |Q_n| \leq d + 1 \leq E - 2(A - AB + Bk) \]
as required.
\end{proof}

We can now prove the main theorem. To ease notation in the statement and proof of the theorem, let $D \defeq 80 \zeta B (10B + 1)^C$, which depends only on $\chi(S)$.

\begin{theorem}
\label{thrm:main}
If $\intersection(\gamma, \calT) > D$ then there is a triangulation $\calT'$ such that either:
\begin{itemize}
\item $\calT$ and $\calT'$ differ by a flip, or
\item $\calT' = T_\delta^k(\calT)$ where $|k| \leq \intersection(\gamma, \calT)$ and $\intersection(\delta, \calT) \leq 2 \zeta$
\end{itemize}
and $\intersection(\gamma, \calT') \leq (1 - 1 / D) \intersection(\gamma, \calT)$.
\end{theorem}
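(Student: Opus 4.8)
The plan is to split on whether a single flip already suffices. Write $I \defeq \intersection(\gamma,\calT)$ and assume $I > D$. If some flippable edge can be flipped so that the intersection number drops to at most $(1 - 1/D)I$, take that flip for $\calT'$; this is the first alternative. Otherwise every flip lowers $I$ by strictly less than $I/D$, so we may fix the largest decrease $m$; then $m < I/D$, and since $I > D > 2\zeta$ Lemma~\ref{lem:drop_intersection} forces $m \ge 1$. We are now precisely in the framework set up in this section: choose a most-intersected edge $\emax$, with a chosen coorientation, put $E \defeq \intersection(\gamma,\emax)$ — so $E \ge I/\zeta$, since $\gamma$ meets the $\zeta$ edges a total of $I$ times — put $\gamma$ in minimal position, and form $P = \gamma \cap \calT$, $Q = P \cap \emax$ (with $|Q| = E$), the constant $A = (2E + m - 2)/4 \approx E/2$, and the blocks $Q_1,\dots,Q_C$.

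The combinatorial heart is to produce a block $Q_n$ that is \emph{returning} — the first repeated type in the chain of a suitable point of $Q_n$ occurs at position $0$, so $\gamma$ makes a loop from $\emax$ back to $\emax$ — and that contains more than $I/D$ points. The lever is the contrapositive of Proposition~\ref{prop:first_return}: a non-returning block is narrow, and, because $A \approx E/2$, its narrowness bound $E - 2(A - AB + Bk)$ collapses to roughly $2B\,\ell + O(mB)$, where $\ell$ is the distance from the block's most central point to the midpoint of $\emax$; so a non-returning block must be small or lie far from that midpoint. If the largest block (with $\ge E/C$ points) returns we are done, since the twist below then reduces $I$ by at least $|Q_n| \ge E/C \ge I/(\zeta C) \ge I/D$. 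If it does not return, it is pushed away from the midpoint, so the part of $\emax$ it vacates near the midpoint must be covered by other blocks, which — being central — are too small unless they return; iterating this dichotomy along $\emax$, tracking the gaps to the midpoint through the recursion $\ell_{i-1} \le (2B+1)\ell_i + O(mB)$ that the same contrapositive yields and unrolling over the at most $C$ blocks, gives $E/2 \le (2B+1)^{C-1}\bigl(\text{size of a central block} + O(mB)\bigr)$. Combined with $E \ge I/\zeta$, $m < I/D$ and $I > D$, this forces the relevant central block to have more than $I/D$ points (and, on the branches where that block instead fails to return, contradicts $E \ge I/\zeta$), the constant $D = 80\zeta B(10B+1)^C$ having ample slack for all these comparisons.

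Given a returning block $Q_n$ with more than $I/D$ points, we extract the twist. The chain witnessing the return, $p_0 = q, p_1, \dots, p_j$ with $j \le 2\zeta$ and $p_0, p_j$ of equal type, is an arc of $\gamma$ that leaves $\emax$, crosses at most $2\zeta$ edges in all, and returns to $\emax$ on the same side; closing it up along $\emax$ produces a curve $\delta$ with $\intersection(\delta,\calT) \le 2\zeta$. Every point of $Q_n$ carries a chain with the same sequence of types, so these are parallel copies of this loop and $\gamma$ spirals around $\delta$; following the first-return map to $\emax$ bounds the depth of the spiral by $|Q_n| \le E \le I$, and this depth is the power $k$, so $|k| \le \intersection(\gamma,\calT)$. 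Put $\calT' = T_\delta^k(\calT)$ for the sign of $k$ that unwinds the spiral. The spiral accounts for at least $\approx |Q_n|\cdot\intersection(\delta,\calT) \ge |Q_n|$ of the crossings of $\gamma$ with $\calT$, and untwisting removes them, so $\intersection(\gamma,\calT') \le I - |Q_n| < I - I/D = (1 - 1/D)\,I$.

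The step I expect to be the main obstacle is this last estimate — promoting "$\gamma$ spirals around $\delta$" to $\intersection(T_\delta^{-k}\gamma,\calT) \le (1 - 1/D)\intersection(\gamma,\calT)$. One must pin down the sign and magnitude of $k$ so that the twist really unwinds, rather than reinforces, the spiral; this is exactly where Corollary~\ref{cor:adjacent_insulation} is used, since the chains coming out of $Q_n$ are insulated enough to stay near the middle of every edge they cross and so wrap $\delta$ coherently. Then one has to estimate $\intersection(T_\delta^{-k}\gamma, e)$ edge by edge through the twist formula and verify that the cancellations over all $\zeta$ edges outweigh the accumulated error terms. Keeping those error terms under control is precisely what fixes the shape of $D$: the Fibonacci-type constant $B = 5^{2\zeta}$ records the geometric decay of insulation along a chain (Lemma~\ref{lem:adjacent_insulation}), $C = 2^{2\zeta}$ counts the possible chain itineraries, and the linear factor $80\zeta$ packages the edge-by-edge bookkeeping together with $E \ge I/\zeta$.
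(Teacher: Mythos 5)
Your overall strategy is the paper's: dispose of the case where a single flip drops $\intersection(\gamma,\calT)$ by more than $m = \intersection(\gamma,\calT)/D$; otherwise run a geometric cascade outward from the midpoint of $\emax$ over the at most $C$ blocks, using the contrapositive of Proposition~\ref{prop:first_return} to show that the first sufficiently wide block must be central and must return to $\emax$ at position $0$ of its chain; then close the returning chain into a short curve $\delta$ and untwist the resulting spiral. Your "distance to the midpoint" bookkeeping is just the paper's "insulation" ordering of the blocks in different clothing (a point's insulation on $\emax$ is exactly determined by its distance to the nearer end), and your recursion $\ell_{i-1}\le(2B+1)\ell_i+O(mB)$ is the paper's thresholds $B_n = 40mB(10B+1)^{n-1}$ with slightly different slack. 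So in its main line the proposal is sound and matches the paper.

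One step, as written, would fail: the opening shortcut "if the largest block returns we are done, since the twist reduces $I$ by at least $|Q_n|\ge E/C$." Knowing only that the first repeated type in the chain of some point of $Q_n$ occurs at position $0$ tells you that $p_0$ and $p_j$ both lie on $\emax$, but it gives no control on how far apart they are along $\emax$. The twist argument needs the return map to shift the block by much less than its width (the paper gets "at most $B_n/4$ points between $p_0$ and $p_j$" versus width $>B_n$, so block and image overlap in $\ge 3B_n/4$ points); that shift bound comes from Corollary~\ref{cor:adjacent_insulation} applied to a \emph{well-insulated} $q$, i.e.\ from the hypothesis of Proposition~\ref{prop:first_return}, not from its conclusion. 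The largest block could be peripheral on $\emax$, return at position $0$, and yet be translated off itself entirely, in which case twisting along $\delta$ need not remove $\approx|Q_n|$ intersections. This branch is harmless only because it is redundant: your cascade (like the paper's) produces a block that is simultaneously wide and central, and centrality is what bounds the shift. Drop the shortcut and run the cascade unconditionally, and the argument is the paper's.
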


\begin{proof}
Let $m \defeq \intersection(\gamma, \calT) / D$. We may assume that no flip reduces $\intersection(\gamma, \calT)$ by more than $m$ as otherwise we are done by performing that flip.

Without loss of generality we may also assume that the blocks $Q_1, \ldots, Q_C$ are ordered by insulation. That is,
\begin{itemize}
\item $Q_1$ is the block containing an innermost point $q_1$,
\item $Q_2$ is the block containing $q_2$, a point of highest insulation in $Q$ not in $Q_1$,
\item $Q_3$ is the block containing $q_3$, a point of highest insulation in $Q$ not in $Q_1 \cup Q_2$,
\item[] $\vdots$
\item $Q_C$ is the block containing $q_C$, a point of highest insulation in $Q$ not in $Q_1 \cup Q_2 \cup \cdots \cup Q_{C-1}$.
\end{itemize}

Now let $B_n \defeq 40 m B (10B + 1)^{n-1}$. If every $Q_n$ contains at most $B_n$ points then
\[ E \leq 40 m B (10B + 1)^C. \]
This cannot happen as it would mean that
\[ \intersection(\gamma, \calT) \leq \zeta E \leq 40 \zeta m B (10B + 1)^C = m D / 2 = \intersection(\gamma, \calT) / 2. \]
Therefore there is a smallest $n$ such that $Q_n$ contains more that $B_n$ points.

Now note that $q_n$, the point in $Q_n$ with maximal insulation that we chose above, is $k$--insulated where
\[ k \defeq \left \lfloor \frac{E-1}{2} \right \rfloor - (B_1 + \cdots + B_{n-1}) \geq \left \lfloor \frac{E - 1}{2} \right \rfloor - \frac{B_n}{10B}. \]
Let $p_0, p_1, \ldots, p_{2 \zeta}$ be the chain of $q_n$ and let $(i, j)$ be the lexicographically smallest pair such that $p_i$ and $p_j$ have the same type.
Now as
\[ E - 2(A - AB + Bk) \leq 2mB + B_n / 5 \leq B_n / 4 \]
we have that $i = 0$ by Proposition~\ref{prop:first_return}.
Therefore both $p_0$ and $p_j$ both lie on $\emax$ and there are at most $B_n / 4$ points between them along $\emax$.

Hence we let $\delta$ be the loop which runs parallel to $\gamma$ from $p_0$ round to $p_j$ and then connects back to $p_0$ by following along $\emax$.
As these points are so close and the block is so wide, the block and its image after it has been pushed along $\gamma$ share at least $3 B_n / 4$ points in $Q$.
It follows that by performing at most $B_n / 2$ Dehn twists along $\delta$ we can reduce $\intersection(\gamma, \calT)$ by at least $B_n / 2 > m$ as required.
\end{proof}

\subsection{The flip--twist graph}

To take into account this addition move, we introduce a modified version of the flip graph $G$ with additional edges. The \emph{flip--twist graph} $\calG = \calG(S)$ is the graph with a vertex for each triangulation of $S$ where $\calT$ and $\calT'$ are connected via:
\begin{itemize}
\item an edge of length $1$ if they differ by a flip, and
\item an edge of length $\log(\intersection(\delta, \calT) + k)$ if they differ by $T_\delta^k$.
\end{itemize}
These edge lengths are proportional to the computation complexity of performing these operations.

\begin{corollary}
For each $\calT \in \calG$ and curve $\gamma$ there is a $\gamma$--simple triangulation $\calT' \in \calG$ such that $d(\calT, \calT') \in O(\log(\intersection(\calT, \gamma))^2)$.
\end{corollary}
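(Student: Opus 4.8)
The plan is to iterate Theorem~\ref{thrm:main} until $\intersection(\gamma, \calT)$ has been pushed below the constant $D$, and then clean up with a bounded number of flips. Write $I_0 \defeq \intersection(\gamma, \calT)$, and recall that $D$ depends only on $\chi(S)$ and so is a constant for the purposes of the asymptotics. As long as the current intersection number exceeds $D$, Theorem~\ref{thrm:main} produces a triangulation reached either by a single flip or by a twist $T_\delta^k$ with $|k| \leq \intersection(\gamma, \calT)$ and $\intersection(\delta, \calT) \leq 2\zeta$, and after this move the intersection number has been multiplied by a factor of at most $1 - 1/D$.

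First I would count the moves. Since $(1 - 1/D)^D \leq e^{-1}$, applying the theorem $N \defeq \lceil D \ln(I_0/D) \rceil = O(\log I_0)$ times yields a triangulation $\calT''$ with $\intersection(\gamma, \calT'') \leq D$. Next I would bound the length of this length-$N$ sequence as a path in $\calG$. A flip edge has length $1$. A twist edge $T_\delta^k$ has length $\log(\intersection(\delta, \calT) + k)$; since $\intersection(\gamma, \cdot)$ is non-increasing along the sequence, at the triangulation where this twist is applied we have $\intersection(\delta, \calT) \leq 2\zeta$ and $|k| \leq \intersection(\gamma, \calT) \leq I_0$, so this length is at most $\log(2\zeta + I_0) = O(\log I_0)$. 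Summing over the $N = O(\log I_0)$ moves, the path from $\calT$ to $\calT''$ has length $O(\log^2 I_0)$.

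Finally, every flip edge of $G$ is also an edge of $\calG$ of the same length, so the corollary following Lemma~\ref{lem:drop_intersection} gives a $\gamma$--simple triangulation $\calT'$ with $d(\calT'', \calT') = O(\intersection(\gamma, \calT'')) = O(D) = O(1)$ in $\calG$. By the triangle inequality $d(\calT, \calT') = O(\log^2 I_0) + O(1) = O(\log(\intersection(\calT, \gamma))^2)$, which is the claim. I do not expect a genuine obstacle here; the only point needing care is that the twist edge lengths must be bounded uniformly along the path, which holds precisely because $\intersection(\gamma, \calT)$ only decreases as the moves are performed, and that $D$ is to be treated as an absolute constant so that both the number of iterations and the final flip count collapse into the stated bound.
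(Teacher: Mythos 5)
Your proposal is correct and follows essentially the same route as the paper: iterate Theorem~\ref{thrm:main} for $O(\log \intersection(\gamma,\calT))$ steps, each of which is an edge of $\calG$ of length at most $\log(2\zeta + \intersection(\gamma,\calT))$, then finish with at most $D = O(1)$ flips via Lemma~\ref{lem:drop_intersection}, and apply the triangle inequality. If anything, your explicit count $(1-1/D)^D \leq e^{-1}$ makes the iteration bound cleaner than the paper's statement.
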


\begin{proof}
By applying Theorem~\ref{thrm:main} at most $\log(\intersection(\gamma, \calT)) / \log(N)$ times we can obtain a triangulation $\calT_0$ such that
\[ \intersection(\gamma, \calT_0) \leq D \inlineand d(\calT, \calT_0) \leq \log(2 \zeta + \intersection(\gamma, \calT)) \log(\intersection(\gamma, \calT)) / \log(N). \]
Now by applying Lemma~\ref{lem:drop_intersection} at most $D$ times we can obtain a triangulation $\calT'$ such that
\[ \intersection(\gamma, \calT') \leq 2\zeta \inlineand d(\calT_0, \calT') \leq D. \]
The result then follows from the triangle inequality.
\end{proof}

Again the curves of slope $k$ on the once-punctured torus from the end of Section~\ref{sec:flips} attain this logarithmic distance bound in $\calG$. The curve of slope $k$ on the once-punctured torus, as shown in Figure~\ref{fig:slow_flips}, has geometric intersection number $\approx k$ and the distance to the nearest $\gamma$--simple triangulation in $\calG$ is $\approx \log(k)$.

\section{Multicurves and multiarcs}

We finish by highlighting that a version of Theorem~\ref{thrm:main} holds if $\gamma$ is a multicurve or even a multiarc. However in the argument of the theorem it is possible that the wide block that we find returns exactly to itself, that is, $p_j = p_0$. In this case $\delta$ is disjoint from $\gamma$ and so performing Dehn twists about it has no effect.

If such a situation occurs then one can remove the entire block, which is a simple closed curve meeting each edge at most twice with high multiplicity. Again this reduces the intersection number by at least $m$ as required. Repeating this process allows us to extract the isotopy classes of $\gamma$ along with their multiplicities. We can then analyse each in turn in order to compute, for example, the topological types present.

Furthermore, in the multiarc case it may be necessary to repeat the analysis of the chains of $\emax$ with its other coorientation. This is because a block may terminate into a puncture before we can follow its chain for the required $2 \zeta$ steps. However if it terminates in both directions then it is a short arc and we can remove it to simplify the situation before repeating the argument.

\begin{acknowledgements}
The author would like to thank Saul Schleimer and Richard Webb for helpful suggestions, in particular for improving the constants used in this result.

The author also acknowledges support from U.S. National Science Foundation grants DMS 1107452, 1107263, 1107367 ``RNMS: GEometric structures And Representation varieties'' (the GEAR Network).
\end{acknowledgements}

\bibliographystyle{plain}
\bibliography{bibliography}

\begin{thebibliography}{1}

\bibitem{AHT}
Ian Agol, Joel Hass, and William Thurston.
\newblock The computational complexity of knot genus and spanning area.
\newblock {\em Trans. Amer. Math. Soc.}, 358(9):3821--3850, 2006.

\bibitem{BellThesis}
Mark Bell.
\newblock {\em Recognising mapping classes}.
\newblock PhD thesis, University of Warwick, 2015.

\bibitem{DynnikovBraids}
Ivan Dynnikov and Bert Wiest.
\newblock On the complexity of braids.
\newblock {\em J. Eur. Math. Soc. (JEMS)}, 9(4):801--840, 2007.

\bibitem{EricksonNayyeri}
Jeff Erickson and Amir Nayyeri.
\newblock Tracing compressed curves in triangulated surfaces.
\newblock {\em Discrete Comput. Geom.}, 49(4):823--863, 2013.

\bibitem{FM}
Benson Farb and Dan Margalit.
\newblock {\em A primer on mapping class groups}, volume~49 of {\em Princeton
  Mathematical Series}.
\newblock Princeton University Press, Princeton, NJ, 2012.

\bibitem{HatcherTriangulations}
Allen Hatcher.
\newblock On triangulations of surfaces.
\newblock {\em Topology Appl.}, 40(2):189--194, 1991.

\bibitem{MosherFoliations}
Lee Mosher.
\newblock Tiling the projective foliation space of a punctured surface.
\newblock {\em Trans. Amer. Math. Soc.}, 306(1):1--70, 1988.

\bibitem{PennerTropical}
R.~C. Penner.
\newblock Tropical lambda lengths, measured laminations and convexity.
\newblock {\em J. Differential Geom.}, 94(2):343--365, 2013.

\bibitem{SchaeferSedgwick}
Marcus Schaefer, Eric Sedgwick, and Daniel {\v{S}}tefankovi{\v{c}}.
\newblock Algorithms for normal curves and surfaces.
\newblock In {\em Computing and combinatorics}, volume 2387 of {\em Lecture
  Notes in Comput. Sci.}, pages 370--380. Springer, Berlin, 2002.

\end{thebibliography}

\end{document}